\newlength{\abstractwidth}
\flushbottom \thispagestyle{empty} \pagestyle{plain}
\renewcommand{\thanks}[1]{\footnote{#1}} % Use this for footnotes
\newcommand{\be}{\begin{equation}}
\newcommand{\bea}{\begin{eqnarray}}
\newcommand{\eea}{\end{eqnarray}} \newcommand{\ee}{\end{equation}}
 \def\ba{\begin{eqnarray}}
\def\ea{\end{eqnarray}}
\def\C{{\bf C}}
\def\o{\omega}
\def\o{\omega}
\def\al{\alpha}
\def\b{\beta}
\def\o{\omega}
\def\ti{\tilde}
\def\C{{\bf C}}
\def\cO{{\cal O}}
\def\cO{{\cal O}}
\def\[{{\bf [}}
\def\]{{\bf ]}}
\def\pl{\partial}
\begin{document}
\newtheorem{theorem}{Theorem}
\newtheorem{proposition}{Proposition}
\newtheorem{lemma}{Lemma}
\newtheorem{corollary}{Corollary}
\newtheorem{definition}{Definition}
\newtheorem{conjecture}{Conjecture}
\newtheorem{example}{Example}
\newtheorem{claim}{Claim}

\begin{centering}
 
\textup{\LARGE\bf The limit of the Yang-Mills flow on semi-stable bundles}

\vspace{5 mm}

\textnormal{\large Adam Jacob}

\begin{abstract}
{\small By the work of Hong and Tian it is known that given a holomorphic vector bundle $E$ over a compact K\"ahler manifold $X$, the Yang-Mills flow converges away from an analytic singular set. If $E$ is semi-stable, then the limiting metric is Hermitian-Einstein and will decompose the limiting bundle $E_\infty$ into a direct sum of stable bundles. Bando and Siu prove $E_\infty$ can be extended to a reflexive sheaf $\hat E_\infty$ on all of $X$. In this paper, we construct an isomorphism between $\hat E_\infty$ and the double dual of the stable quotients of the graded Seshadri filtration $Gr^s(E)^{**}$ of  $E$.  }

\end{abstract}

\end{centering}

\begin{normalsize}

\section{Introduction}
The purpose of this paper is to investigate the limiting properties of the Yang-Mills flow in the case where the flow does not converge. Given a holomorphic vector bundle $E$ over a compact K\"ahler manifold $X$, the Yang-Mills flow provides a heat flow approach to the Hermitian-Einstein problem, which was first solved in the case of curves by Narasimhan and Seshadri $\cite{NS}$, by Donaldson for algebraic surfaces $\cite{Don1}$, and then by Uhlenbeck and Yau for arbitrary dimension $\cite{UY}$. While Uhlenbeck and Yau used the elliptic approach of the method of continuity, Donaldson introduced a heat flow related to the Yang-Mills flow, and showed convergence of the flow is equivalent to an algebraic stability condition. This heat flow approach has been extended to various other settings, most notably to higher dimensional algebraic manifolds by Donaldson $\cite{Don2}$, to Higgs bundles by Simpson $\cite{Simp}$, and to reflexive sheaves by Bando-Siu $\cite{BaS}$. The latter two proofs require the $C^0$ estimate proven in $\cite{UY}$. There is also an exposition of the heat flow proof by Siu for all dimensions in $\cite{Siu}$. We say $E$ is stable (in the sense of Mumford-Takemoto) if for every proper coherent torsion-free subsheaf ${\cal F}\subset E$,
\be
\frac{deg({\cal F})}{rk({\cal F})}<\frac{deg(E)}{rk(E)}.\nonumber
\ee
In the stable case, the flow converges to a Hermitian-Einsten metric on $E$. While this theorem is extremely powerful and useful, it leaves open many interesting questions about what happens when the flow does not converge smoothly on all of $X$.

In the case where $E$ is not stable and $X$ is a K\"ahler surface, the convergence properties of the flow have been addressed by Daskalopoulos and Wentworth in $\cite{DW}$ and $\cite{DW2}$. They prove that away from from an analytic bubbling set and along a subsequence, the Yang-Mills flow converges to a limiting connection on the stable quotients of the graded Harder-Narasimhan-Seshadri filtration of $E$. We denote this direct sum of quotients as $Gr^{hns}(E)$, and the induced limiting connection on each stable quotient is smooth and Hermitian-Einstein. Furthermore, Daskalopoulos and Wentworth identified the bubbling set as being equal to the singular set of the torsion-free sheaf $Gr^{hns}(E)$, and were able to prove that the limiting bundle on which the flow converges is isomorphic to $Gr^{hns}(E)^{**}$, verifying a conjecture of Bando and Siu in this setting. 

This paper gives some partial results towards extending the work of Daskalopoulos and Wentworth to higher dimensions. In particular we restrict to the case where the bundle $E$ is semi-stable, which is defined by the condition that for every proper coherent torsion-free subsheaf ${\cal F}\subset E$,
\be
\frac{deg({\cal F})}{rk({\cal F})}\leq\frac{deg(E)}{rk(E)}.\nonumber
\ee
Given a subsequence of connections $A_j$ along the Yang-Mills flow, we define the analytic bubbling set by:
\be
Z_{an} =\bigcap_{r>0}\{x\in X\,|\liminf_{j\rightarrow\infty}\,r^{4-2n}\int_{B_r(x)}|F_{A_j}|^2\o^n\geq \epsilon\}.\nonumber
\ee
This set is the same singular set used by Hong and Tian in $\cite{HT}$. Our complete result is as follows:
\begin{theorem}
\label{main theorem}
Let $E$ be a semi-stable holomorphic vector bundle over a compact K\"ahler manifold $X$. Let $A(t)$ be a connection on $E$ evolving along the Yang-Mills flow. Then there exists a subsequence of times $t_j$ such that on $X\backslash Z_{an}$, the sequence $A(t_j)$ converges (modulo gauge transformations) in $C^\infty$  to a limiting connection $A_\infty$ on a limiting bundle $E_\infty$. $E_\infty$ extends to all of $X$ as a reflexive sheaf $\hat E_\infty$ which is isomorphic to the double dual of the stable quotients of the graded Seshadri filtration $Gr^s(E)^{**}$ of $E$.
\end{theorem}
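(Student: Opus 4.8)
The only genuinely new content is the identification of $\hat E_\infty$, so most of the set-up is quoted from the work recalled above. First I would invoke Hong--Tian to produce the subsequence $t_j$ and the smooth limit $A_\infty$ on $E_\infty$ over $X\setminus Z_{an}$, where $Z_{an}$ has complex codimension at least $2$; following Hong--Tian, semi-stability of $E$ forces $A_\infty$ to be Hermitian--Einstein with Einstein factor the slope $\mu(E)=deg(E)/rk(E)$ and $E_\infty$ to split as a direct sum of stable bundles. Bando--Siu then extend $(E_\infty,A_\infty)$ across $Z_{an}$ to a reflexive sheaf $\hat E_\infty$ carrying an admissible Hermitian--Einstein metric, and by their converse to the Hitchin--Kobayashi correspondence for reflexive sheaves $\hat E_\infty$ is polystable; since $Z_{an}$ has complex codimension at least $2$ we also get $c_1(\hat E_\infty)=c_1(E)$ and hence $\mu(\hat E_\infty)=\mu(E)$. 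Write $\hat E_\infty=\bigoplus_k G_k$ with the $G_k$ stable of slope $\mu(E)$.

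The core step is to match this polystable reflexive sheaf with $Gr^s(E)^{**}$. Fix a Seshadri (Jordan--H\"older) filtration $0=E_0\subset E_1\subset\cdots\subset E_\ell=E$ by saturated coherent subsheaves whose successive quotients $Q_i=E_i/E_{i-1}$ are stable with $\mu(Q_i)=\mu(E)$, so that $Gr^s(E)^{**}=\bigoplus_i Q_i^{**}$. I would track this filtration along the flow. In the Donaldson heat flow picture---fix $\bar\partial_E$ and evolve the metric $H(t)$, which changes $A(t)$ only by complex gauge transformations---let $\pi_i(t)$ be the $H(t)$-orthogonal projection onto the holomorphic subsheaf $E_i$, smooth away from ${\rm Sing}(E_i)$. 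Because $\mu(E_i)=\mu(E)$, a Chern--Weil / monotonicity computation along the flow (of the type underlying the Hitchin--Kobayashi correspondence) gives monotonicity of the analytic degrees $deg_\omega(E_i,H(t))$ and a uniform bound on $\int_X|\bar\partial_{A(t)}\pi_i(t)|^2\,\omega^n$, the $L^2$ norm of the second fundamental forms. After inserting the unitary gauge transformations furnished by Hong--Tian and passing to a further subsequence, one extracts weak limits $\pi_i^\infty$ in $W^{1,2}_{loc}(X\setminus Z_{an})$ that are weakly holomorphic projections; by the Uhlenbeck--Yau regularity theorem each $\pi_i^\infty$ is smooth off a complex-analytic subset and defines a holomorphic subsheaf $\cF_i\subset\hat E_\infty$. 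One obtains a filtration $0\subset\cF_1\subset\cdots\subset\cF_\ell=\hat E_\infty$ whose $i$-th graded piece is torsion-free of the same rank and $c_1$ as $Q_i$ and generically isomorphic to $Q_i$.

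To conclude I would upgrade this to an isomorphism. Since $\hat E_\infty$ is polystable of slope $\mu(E)$, the filtration $\{\cF_i\}$ splits, so $\hat E_\infty$ is isomorphic to the double dual of its associated graded, i.e. to $\bigoplus_i(\cF_i/\cF_{i-1})^{**}$. For each $i$ the sheaves $(\cF_i/\cF_{i-1})^{**}$ and $Q_i^{**}$ are reflexive of equal rank and equal $c_1$, and the limiting comparison map between them is an isomorphism off a subvariety of complex codimension at least $2$; as reflexive sheaves agreeing off a codimension $\ge 2$ subvariety are isomorphic, $(\cF_i/\cF_{i-1})^{**}\cong Q_i^{**}$, whence $\hat E_\infty\cong\bigoplus_i Q_i^{**}=Gr^s(E)^{**}$. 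Equivalently, this shows $\hat E_\infty$ is polystable and $S$-equivalent to $E$, and the unique polystable representative of that $S$-equivalence class is $Gr^s(E)^{**}$; the bookkeeping $\sum_i rk(Q_i)=rk(E)=rk(\hat E_\infty)$ and $\sum_i c_1(Q_i^{**})=c_1(Gr^s(E))=c_1(E)=c_1(\hat E_\infty)$ ensures that no graded piece is lost in the limit.

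I expect the main obstacle to be the compactness step: proving the uniform $W^{1,2}$ bound on the projections $\pi_i(t)$ (equivalently on the second fundamental forms of the Seshadri pieces) and showing that their weak limits are genuine projections of unchanged rank, with no bubbling of the filtration escaping $Z_{an}$. This is precisely where semi-stability of $E$ is essential---without it one would first have to run the same scheme on the Harder--Narasimhan filtration, where the slopes strictly jump and the degeneration of the metric is considerably more delicate---and it is also where one must reconcile the a priori distinct singular sets (the ${\rm Sing}(E_i)$, the singular set of $Gr^s(E)$, and $Z_{an}$), once more using that a reflexive sheaf is determined by its restriction to the complement of a codimension $\ge 2$ subvariety.
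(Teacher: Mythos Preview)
Your outline tracks the paper's strategy closely up through the convergence of the projections $\pi_i(t)$, but there is a genuine gap at the identification step. You assert that the limiting graded piece $(\cF_i/\cF_{i-1})^{**}$ is isomorphic to $Q_i^{**}$ because they share rank and $c_1$ and because ``the limiting comparison map between them is an isomorphism off a subvariety of complex codimension at least $2$.'' But you never construct this comparison map, and rank/slope data alone cannot pin down a stable reflexive sheaf. Convergence of the projections $\pi_i(t)$ yields a subsheaf $\cF_i\subset\hat E_\infty$ of the correct rank and slope; it does \emph{not} produce a map $Q_i\to\cF_i/\cF_{i-1}$. The subsheaves $w_j(E_i)\subset(E,\bar\partial_j)$ are all isomorphic to $E_i$, but a limit of mutually isomorphic sheaves need not be isomorphic to them (this is the usual jump phenomenon), so something more is required.

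This is exactly the point the paper isolates as the ``main difficulty.'' The paper tracks not only the projections but also the holomorphic inclusions: setting $f_j:=\tilde w_j\circ f_0:S\to(E,\bar\partial_j)$ with $\|f_j\|_{L^2(X)}=1$, it must be shown that a subsequential limit $f_\infty:S\to\tilde S_\infty$ is \emph{nonzero}. Since the $A_j$ are only controlled on compact $K\subset X\setminus Z_{an}$, the mass of $f_j$ could in principle concentrate on $X\setminus K$. The paper rules this out by proving $\|f_j\|_{C^0(X)}\le C\|f_j\|_{L^2(K)}$ via a Moser iteration on $K$ together with a lattice of complex lines avoiding $Z_{an}$ and repeated use of the Cauchy integral formula to push the interior $C^0$ bound across $Z_{an}$. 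Once $f_\infty\ne 0$, stability of $S$ and semi-stability of $\tilde S_\infty$ (same slope) force $f_\infty$ to be an isomorphism, and one inducts up the filtration. A secondary point: you only claim a uniform bound on $\int_X|\bar\partial_{A(t)}\pi_i(t)|^2$, but the argument actually needs this quantity to tend to zero (so that $\mu(\tilde\pi_\infty)=\mu(E_\infty)$ via Chern--Weil and the limit splits holomorphically); this is where the approximate Hermitian--Einstein structure $\sup_X|\Lambda F-\mu I|\to 0$ on semi-stable $E$ is used, not merely a monotonicity statement.
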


Hong and Tian prove in  $\cite{HT}$ that a subsequence along the Yang-Mills flow $A_j$ converges smoothly to a limiting Yang-Mills connection on a limiting bundle $E_\infty$, away from the analytic bubbling set. They also prove that $Z_{an}$ is a holomorphic subvariety of $X$, although we do not utilize this result. By the work of Bando and Siu $\cite{BaS}$, we know $E_\infty$ extends to all of $X$ as a reflexive sheaf $\hat E_\infty$. The contribution of this paper is to construct an isomorphism between $\hat E_\infty$ and $Gr^s(E)^{**}$, verifying a conjecture of Bando and Siu from $\cite{BaS}$ in the case where $E$ is semi-stable. This result is attractive not only in that it gives a better understanding of the limiting properties of the flow, but also because $Gr^s(E)$ is canonical. Thus in the semi-stable case, the Yang-Mills flow does indeed ``break up" the bundle in a natural way that only depends on $E$.

Here we briefly describe the proof of Theorem $\ref{main theorem}$. Many of the main ideas can be found in $\cite{Buch}$,$\cite{DW}$, and $\cite{Don1}$. For us, the key tool is the existence of an approximate Hermitian-Einstein structure on $E$ from $\cite {J}$. This means that for all $\epsilon>0$, there exists a metric $H$ on $E$ with curvature $F$ such that
\be
\sup_X|\Lambda F-\mu I|<\epsilon,\nonumber
\ee
where $\mu$ is a fixed constant. Now, with a modification of the Chern-Weil formula we can use this approximate Hermitian-Einstein structure to show that along the Yang-Mills flow the second fundamental form of any destabilizing subsheaf $S$ of $E$ must go to zero, creating a holomorphic splititng of the limiting bundle $E_\infty=S_\infty\oplus Q_\infty$.  Assume $S$ has the lowest rank among all destabilizing subsheaves of $E$. Then $S$ is stable. We show that after normalization, the sequence of holomorphic inclusion maps $f_j:S\longrightarrow E$ (generated by the flow) converge on compact subsets and along a subsequence to a non-trivial holomorphic map $f_\infty$. The key difficulty is we loose control of the connections $A_j$ as we approach $Z_{an}$, forcing us to work delicately on compact subsets away from this singular set in order to get the proof to work.
Next, because $S$ is stable and $S_\infty$ semi-stable, it follows that the holomorphic map we constructed must in fact be an isomorphism. Applying this argument inductively to the quotients of the Seshadri filtration allows us to construct an isomorphism between $Gr^s(E)$ and $E_\infty$ on $X\backslash Z_{an}$. Theorem $\ref{main theorem}$ follows from the uniqueness of the reflexive extention $\hat E_\infty$. 

We remark that in order to give a complete generalization of the work of Daskalopoulos-Wentworth in the semi-stable case, one would need to show that in fact $Z_{an}$ is equal to the set where $Gr^s(E)$ fails to be locally free. To do so, a more detailed analysis of the singular set is needed.

The paper is organized as follows. In Section 2 we describe our setup and outline the relationship between the Yang-Mills flow and another geometric flow, which we call the Donaldson heat flow. In Section 3 we prove two convergence results along the flow, first the convergence of the Seshadri filtration and then the convergence of the holomorphic inclusion maps. Finally, in Section 4 we prove Theorem $\ref{main theorem}$. As a corollary we prove that the algebraic singular set of $Gr^s(E)$ is contained in the analytic bubbling set.

\medskip
\begin{centering}
{\bf Acknowledgements}
\end{centering}
\medskip

First and foremost, the author would like to thank his thesis advisor, D.H. Phong, for all his guidance and support during the process of writing this paper. The author would also like to thank Thomas Nyberg, Tristan Collins, and Valentino Tosatti for many enlightening discussions and encouragement. Finally, the author would like to express his upmost gratitude to Richard Wentworth for encouraging him to work on this problem. Professor Wentworth provided valuable insight into the structure of the proof, and for this the author is most grateful. This research was funded in part by the National Science Foundation, Grant No. DMS-07-57372, as well as Grant No. DMS-1204155. The results of this paper are part of the author's Ph.D. thesis at Columbia University.

\section{Background}

\label{bkg}

We begin with some basic facts about holomorphic vector bundles, and then introduce the Donaldson heat flow and the Yang-Mills flow.

 Let $E$ be a holomorphic vector bundle over a compact K\"ahler manifold $X$ of complex dimension $n$. Locally the K\"ahler form is given by:  
\be
 \o=\frac{i}{2}\,g_{\bar kj}\,dz^j\wedge d\bar z^k,\nonumber
 \ee
where $g_{\bar kj}$ is a Hermitian metric on the holomorphic tangent bundle $T^{1,0}X$. Let $\Lambda$ denote the adjoint of wedging with $\o$. If $\eta$ is a $(1,1)$ form, then in coordinates $\Lambda\eta=-ig^{j\bar k}\eta_{\bar kj}$. The volume form on $X$ is given by $\dfrac{\o^n}{n!}$, and for simplicity we denote it by $\o^n$.

Given a metric $H$ on $E$, the curvature of this metric is the following endomorphism valued two-from:
\be
F:= \frac{i}{2}\,F_{\bar kj}{}^{\al}{}_{\gamma}\,dz^j\wedge d\bar z^k,\nonumber
\ee
where in a holomorphic frame $F_{\bar kj}{}^{\al}{}_{\gamma}=-\pl_{\bar k}(H^{\al\bar\b}\pl_j H_{\bar \b\gamma})$. We compute the degree of $E$ as follows:
\be
deg(E):=\int_X{\rm Tr}(\Lambda F)\,\o^n\nonumber.
\ee
Because $X$ is K\"ahler this definition is independent of the choice of metric. 
The slope of the vector bundle $E$ is defined to be the following quotient:
\be
\mu(E)=\frac{deg(E)}{rk(E)}.\nonumber
\ee
We define $E$ to be stable if for any proper torsion-free subsheaf ${\cal F}\subset E$, we have $
\mu({\cal F})<\mu (E).$
$E$ is semi-stable if $\mu({\cal F})\leq\mu (E)$ for all proper torsion-free subsheaves.

Since we are restricting ourselves to the case where $E$ is semi-stable but not stable, we can always assume there is at least one proper subsheaf $\cal F$ of $E$ such that $\mu({\cal F})=\mu(E)$. In general there may be many such subsheaves.

\begin{definition}

{\em Given a semi-stable bundle $E$, a }Seshadri filtration {\rm is a filtration of torsion free subsheaves }
\be
\label{SF1}
0\subset S^0\subset S^1\subset\cdots\subset S^q=E,
\ee
{\rm such that $\mu(S^i)=\mu(E)$ for all $i$, and each quotient $Q^i=S^i/S^{i-1}$ is torsion free and stable. }
\end{definition}
While such a filtration may not be unique, the direct sum of the stable quotients, denoted
$Gr^s(E):=\oplus_{i} Q^i$, is canonical, so given any two Seshadri filtrations the corresponding direct sums of quotients will be natually isomorphic $\cite{Kob}$.  We define the algebraic singular set of $E$ as
\be
Z_{alg}:=\{x\in X\,| Gr^s(E)_x {\rm \,\,is\,\, not\, \,free}\}.\nonumber
\ee
Since the sheaf $Gr^s(E)$ is torsion free, we know $Z_{alg}$ is of complex codimension two. 
\medskip

We now define the Donaldson heat flow. Not only is this flow the framework in which we realize the approximate Hermitian-Einstein structure on $E$, but this flow also provides our approach to the Yang-Mills flow. Fix an initial metric $H_0$ on $E$. Any other metric $H$ defines an endormphism $h\in Herm^+(E)$ by $h=H_0^{-1}H$. The Donaldson heat flow is a flow of endomorphisms $h=h(t)$ given by:
\be
h^{-1}\dot h=-(\Lambda F-\mu I),\nonumber
\ee
where $F$ is the curvature of the metric $H(t)=H_0h(t)$. We set the initial condition $h(0)=I$. A unique smooth solution of the flow exists for all $t\in[0,\infty)$, and on any stable bundle this solution will converge to a smooth Hermitian-Einstein metric $\cite{Don1},\cite{Don2},\cite{Simp},\cite{Siu}$. However, in our case $E$ is only semi-stable, so we do not expect the flow to converge. We do know from $\cite{J}$ that $E$ admits an approximate Hermitian-Einstein structure, so for all $\epsilon>0$ there exists a $t_\epsilon$ along the flow such that for $t\geq t_\epsilon$ we have:
\be
\sup_X|\Lambda F-\mu I|\leq \epsilon.\nonumber
\ee
We now describe our approach to the Yang-Mills flow, which is the viewpoint taken by Donaldson in $\cite{Don1}$. For details we refer the reader to $\cite{Don1}$, and just present the setup here.

Fix the metric $H_0$. Working in a unitary frame, the connection $A_0$ induced by this metric will split into a $(1,0)$ component $A_0'$ and a $(0,1)$ component $A_0''$, giving us the decomposition $A_0=A_0'+A_0''$. Now, starting with our initial holomorphic structure $\bar\pl_0=\bar\pl+A_0''$,
we consider the flow of holomorphic structures
$\bar\pl_t=\bar\pl+A'',\nonumber$
where $A''$ is defined by the action of $w=h^{1/2}$ on $A_0''$. Explicitly, this action is given by:
\be
A''=wA_0''w^{-1}-\bar\pl ww^{-1}.\nonumber
\ee
Now, given this flow of holomorphic structures, we can use the fixed metric $H_0$ to define a flow of unitary connections $A=A(t)$. The curvature of $A$ can be computed by the following formula:
\be
F_A=\bar \pl A'+\pl A''+A'\wedge A''+A''\wedge A'.\nonumber
\ee
With this setup one can check that $A$ evolves along the Yang-Mills flow:
\be
\dot A=- d^*_{A}F_{A}.\nonumber
\ee
In order to avoid notational confusion we always refer to the curvature evolving along the Donaldson Heat flow as $F$, and the curvature evolving along the Yang-Mills flow as $F_{A}$. These two curvatures are related by the action of $w$:
\be
F_A=w\,F\, w^{-1}.\nonumber
\ee

\medskip

We conclude this section by stating the convergence result of Hong and Tian from $\cite{HT}$. Consider a sequence of connections $A_j$ evolving along the Yang-Mills flow. Then, on $X\backslash Z_{an}$, along a subsequence the connections $A_j$ converge in $C^\infty$, modulo unitary gauge transformations, to a Yang-Mills connection $A_\infty$.
Thus, always working on $X\backslash Z_{an}$,  we have a sequence of holomorphic structures $(E,\bar\pl_j)$ which converge in $C^\infty$ to a holomorphic structure on a (possibly) different bundle $(E_\infty,\bar\pl_\infty)$. Since $E$ is semi-stable we know that the limiting Yang-Mills connection $A_\infty$ on $E_\infty$ will be Hermitian-Einstein, and thus will decompose $E_\infty$ into a direct sum of stable bundles:
\be
E_\infty=Q^1_\infty\oplus Q^2_\infty\oplus\cdots \oplus Q^p_\infty\nonumber,
\ee
each admitting an induced smooth Hermitian-Einstein connection. 
\section{Convergence Results}

In this section we prove convergence of the Seshadri filtration of $E$ along the flow, as well as the convergence of the holomorphic inclusion maps which define these sheaves. Given a torsion free subsheaf  ${\cal F}\subset E$, in many instances it will be simpler to consider the projection $\pi:E\longrightarrow E$ defining $\cal F$, so we go over this equivalence first.

Recall we have a fixed metric $H_0$ on $E$. Since we can view $\cal F$ as a holomorphic vector bundle off the singular set $\Sigma$ where $\cal F$ fails to be locally free, off $\Sigma$ we can define the orthogonal projection $\pi$. One can check that although $\pi$ develops a singularity on $\Sigma$, this projection is still in $L^2_1(X)$ $\cite{J}$. Conversely, in $\cite{UY}$ Uhlenbeck and Yau prove that given a $L^2_1$ projection $\pi$, it defines a coherent subsheaf of $(E,\bar\pl_0)$ provided it satisfies the following ``weakly holomorphic" condition:
\be
||(I-\pi)\,\bar\pl_0\,\pi||_{L^2}=0.\nonumber
\ee
Thus we can go back and forth between a cohearent subsheaf and its $L^2_1$ projection. 

 Let the following filtration of sheaves
\be
\label{SF}
0\subset \pi_0^0\subset\pi_0^1\subset\pi_0^2\cdots\subset\pi_0^p\subset E
\ee
be a Seshadri filtration of $E$.  The action of $w_j$ produces a sequence of filtrations $\{\pi^{(i)}_j\}$, where each $\pi^{(i)}_j$ is defined by orthogonal projection onto the subsheaf $w_j(\pi^{(i)}_0)$. Our first goal is to show that this sequence of filtrations converges along a subsequence. 

\begin{proposition}
\label{convergence of pi}
The sequence of Seshadri filtrations $\{\pi^{(i)}_j\}$ given by the action of $w_j$ converges in $L^2_1$ (along a subsequence), to a filtration $\{\ti\pi^{(i)}_\infty\}$ of $E_\infty$ off $Z_{an}$. 
\end{proposition}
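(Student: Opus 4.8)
The plan is to combine a modified Chern--Weil formula with the approximate Hermitian--Einstein structure on $E$ to force the second fundamental forms of the transported subsheaves $w_j(\pi^{(i)}_0)$ to vanish in $L^2(X)$, and then to extract the limiting filtration by a uniform-bound and compactness argument; throughout one works along a subsequence with $t_j\to\infty$, so that both the Hong--Tian convergence and the approximate Hermitian--Einstein estimate from \cite{J} are in force.

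First I would fix $i$ and set $\beta_j^{(i)}:=\bar\pl_{A_j}\pi^{(i)}_j$, the second fundamental form of $w_j(\pi^{(i)}_0)\subset(E,\bar\pl_j)$ measured with the fixed metric $H_0$. Since $w_j$ induces a holomorphic isomorphism $(E,\bar\pl_0)\to(E,\bar\pl_j)$, the subsheaf $w_j(\pi^{(i)}_0)$ has the same rank as $\pi^{(i)}_0$ and slope $\mu(E)$, the latter because $\pi^{(i)}_0$ is a term of a Seshadri filtration. The Chern--Weil identity for $L^2_1$ subsheaves (as in \cite{UY}), applied with $H_0$ and combined with $\mu(w_j(\pi^{(i)}_0))=\mu(E)$, yields
\be
\| \beta_j^{(i)} \|^2_{L^2(X)} = \int_X \langle\, \Lambda F_{A_j} - \mu I,\ \pi^{(i)}_j\,\rangle\,\o^n .\nonumber
\ee
Since $A_j$ is unitary for $H_0$, the endomorphism $\Lambda F_{A_j}-\mu I$ is $H_0$-Hermitian, so $\|\Lambda F_{A_j}-\mu I\|^2_{L^2(X)}=\int_X\tr\big((\Lambda F_{A_j}-\mu I)^2\big)\,\o^n$; the relation $F_{A_j}=w_jF_jw_j^{-1}$ together with cyclicity of the trace turns this into $\int_X\tr\big((\Lambda F_j-\mu I)^2\big)\,\o^n\le rk(E)\,{\rm Vol}(X)\,\big(\sup_X|\Lambda F_j-\mu I|\big)^2$, which tends to $0$ by the approximate Hermitian--Einstein structure of $E$. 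As each $\pi^{(i)}_j$ is an orthogonal projection, hence bounded in $L^\infty(X)$ uniformly in $j$, Cauchy--Schwarz gives $\|\beta_j^{(i)}\|_{L^2(X)}\to0$.

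Next I would pass to the limit. The $\pi^{(i)}_j$ are uniformly bounded in $L^\infty(X)$; on any compact $K\subset X\backslash Z_{an}$ the Hong--Tian convergence $A_j\to A_\infty$ in $C^\infty(K)$ controls $a_j:=A_j-A_0$ on $K$, so from $\bar\pl_0\pi^{(i)}_j=\beta_j^{(i)}-[a_j'',\pi^{(i)}_j]$ and the self-adjointness of $\pi^{(i)}_j$ one bounds $\|\pi^{(i)}_j\|_{L^2_1(K)}$ uniformly. A diagonal argument over an exhaustion of $X\backslash Z_{an}$ then yields a subsequence along which, for all $i$ simultaneously, $\pi^{(i)}_j$ converges weakly in $L^2_1$ on compact subsets of $X\backslash Z_{an}$ to some $\ti\pi^{(i)}_\infty$. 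By Rellich this convergence is strong in $L^2_{\rm loc}$, so $\ti\pi^{(i)}_\infty$ inherits $(\ti\pi^{(i)}_\infty)^2=\ti\pi^{(i)}_\infty=(\ti\pi^{(i)}_\infty)^*$, the nesting $\ti\pi^{(i-1)}_\infty\ti\pi^{(i)}_\infty=\ti\pi^{(i-1)}_\infty$, and, since $\tr\pi^{(i)}_j$ equals $rk(\pi^{(i)}_0)$ almost everywhere, the correct rank. Passing to the limit in $\bar\pl_{A_j}\pi^{(i)}_j=\beta_j^{(i)}\to0$, using $\bar\pl_j\to\bar\pl_\infty$ on $K$, gives $\bar\pl_\infty\ti\pi^{(i)}_\infty=0$ on $X\backslash Z_{an}$; elliptic regularity then makes $\ti\pi^{(i)}_\infty$ a smooth holomorphic projection there, so $\{\ti\pi^{(i)}_\infty\}$ is a filtration of $E_\infty$ off $Z_{an}$ which moreover holomorphically splits it, each graded piece carrying the restriction of the limiting Hermitian--Einstein connection. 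Finally, feeding the strong $L^2_{\rm loc}$ convergence of $\pi^{(i)}_j$ and the $C^\infty_{\rm loc}$ convergence of $a_j$ back into $\bar\pl_0\pi^{(i)}_j=\beta_j^{(i)}-[a_j'',\pi^{(i)}_j]$ upgrades the convergence to strong $L^2_1$ on compacts, which is the assertion.

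The main obstacle is Step 1: one must make sense of the Chern--Weil formula for the transported projections, which are only $L^2_1$ and singular along the (codimension $\ge2$) loci where $w_j(\pi^{(i)}_0)$ fails to be locally free, and one must relate the $L^2$-smallness of $\Lambda F_{A_j}-\mu I$ for the Yang--Mills flow to the $C^0$-smallness of $\Lambda F_j-\mu I$ for the Donaldson heat flow. A secondary difficulty is that $A_j$ is controlled only on $X\backslash Z_{an}$, so every passage from a $\bar\pl_{A_j}$-derivative to a $\bar\pl_0$-derivative is purely local; it is essential here that $\|\beta_j^{(i)}\|_{L^2(X)}\to0$ holds \emph{globally} on $X$, since this is what prevents the limiting subsheaves from dropping rank near $Z_{an}$.
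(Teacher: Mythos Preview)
Your argument is correct and follows the same overall strategy as the paper: Chern--Weil plus the approximate Hermitian--Einstein structure forces $\|\bar\pl_j\pi_j\|_{L^2(X)}\to0$, and then a compactness argument produces the limiting weakly holomorphic filtration of $(E_\infty,\bar\pl_\infty)$. The technical execution differs in a few places worth noting. First, to estimate $\int_X{\rm Tr}((\Lambda F_{A_j}-\mu I)\pi_j)\,\o^n$ the paper does not pass through Cauchy--Schwarz and trace cyclicity as you do; instead it rewrites the integrand as ${\rm Tr}((\Lambda F_j-\mu I)\,w_j^{-1}\pi_jw_j)$ and observes that $w_j^{-1}\pi_jw_j$ is idempotent, hence has eigenvalues in $\{0,1\}$, giving the $L^1$ bound $\int_X|\Lambda F_j-\mu I|\,\o^n$ directly. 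Second, for the $L^2_1$ bound the paper works globally on $X$ rather than locally on $K$: from $\pi_j=\pi_j^*$ one has $|\bar\pl_j\pi_j|=|\pl_j\pi_j|$, so the full covariant derivative of $\pi_j$ is bounded in $L^2(X)$, yielding a weak $L^2_1(X)$ limit without ever invoking boundedness of $a_j$. Third, to upgrade to strong $L^2_1$ convergence on compacts the paper does not feed the identity $\bar\pl_0\pi_j=\beta_j-[a_j'',\pi_j]$ back in as you do; it instead uses the curvature decomposition for subbundles to bound $\int_K|\nabla_j(\bar\pl_j\pi_j)|^2\,\o^n\le\int_K|F_{A_j}|^2\,\o^n$, giving an $L^2_2$ bound and hence compactness in $L^2_1$ by Rellich. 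Each of your alternatives is valid; the paper's choices are slightly more economical in that the first two avoid any appeal to the local $C^\infty$ control of $A_j$ until the very last step.
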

\begin{proof}
For notational simplicity, we work with the subsheaf of lowest rank from $\eqref{SF}$ and denote it as $\pi_0$. We consider the sequence of projections $\pi_j$ defined by the action of $w_j$, and the result for the entire filtration will follow by induction on rank. First we show that the second fundamental form $\bar\pl_j\pi_j$ converges to zero in $L^2$. To see this, we note the following modification of the Chern-Weil formula:
\be
\label{CW}
\mu(\pi_j)=\mu(E)+\dfrac{1}{rk(\pi_j)}(\int_X{\rm Tr}((\Lambda F_{A_j}-\mu I)\circ \pi_j)\o^n-||\bar\pl_j\pi_j||_{L^2}^2).
\ee 
Now since $\mu(E)=\mu(\pi_j)$, we have that:
\be
||\bar\pl_j\pi_j||_{L^2}^2=\int_X{\rm Tr}((\Lambda F_{A_j}-\mu I)\circ \pi_j)\,\o^n.\nonumber
\ee
Recall that $F_{A_j}=w_j\circ F_j\circ w_j^{-1}$, where $F_j$ evolves along the Donaldson heat flow. This allows us to write:
\bea
\label {L1est}
||\bar\pl_j\pi_j||_{L^2}^2&=&\int_X{\rm Tr}((\Lambda F_j-\mu I)\circ w_j^{-1}\pi_jw_j)\,\o^n.\nonumber\\
&\leq&\int_X|\Lambda F_j-\mu I|\,\o^n,
\eea
where the inequality follows from the fact that $(w_j^{-1}\pi_jw_j)^2=w_j^{-1}\pi_jw_j$, thus the operator $w_j^{-1}\pi_jw_j$ has eigenvalues equal to only zero or one. Now since $E$ admits an apporximate Hermitian-Einstein structure, we see $||\bar\pl_j\pi_j||_{L^2}^2$ goes to zero as $j\rightarrow\infty$. Beacuse $\pi_j=\pi_j^*$ it follows that $|\bar\pl_j\pi_j|^2=|\pl_j\pi_j|^2$, thus we have $\pl_j\pi_j$ is uniformly bounded in $L^2$ and and $\pi_j$ converges along a subsequence to a weak limit $\ti\pi_\infty$ in $L^2_1$. We now work on a compact set $K$ away from $Z_{an}$. Since 
\be
\bar\pl_\infty\pi_j=\bar\pl_j\pi_j+(\bar\pl_\infty-\bar\pl_j)\pi_j\nonumber,
\ee
it follows that
\bea
||\bar\pl_\infty\pi_j||_{L^2(K)}&\leq&||\bar\pl_j\pi_j||_{L^2(K)}+||(\bar\pl_\infty-\bar\pl_j)\pi_j||_{L^2(K)}\nonumber\\
&\leq&||\bar\pl_j\pi_j||_{L^2(K)}+||A_j-A_\infty||_{L^q(K)}^\lambda||\pi_j||^{1-\lambda}_{L^r(K)},\nonumber
\eea
where $q$, $r$, and $\lambda$ are given by Holder's inequality. We have that $A_j\rightarrow A_\infty$ in $L^q$ on compact subsets $K$ away from $Z_{an}$ (by Theorem 3.6 from $\cite{U}$, and $\cite{HT}$). Because $||\bar\pl_j\pi_j||_{L^2}\rightarrow 0$ it follows that $||\bar\pl_\infty\pi_j||_{L^2(K)}\rightarrow 0$. Finally, from the simple formula
\be
\bar\pl_\infty\ti\pi_\infty=\bar\pl_\infty\pi_j+\bar\pl_\infty(\ti\pi_\infty-\pi_j),\nonumber
\ee
we see that
\bea
||\bar\pl_\infty\ti\pi_\infty||_{L^2(K)}&\leq&||\bar\pl_\infty\pi_j||_{L^2(K)}+||\bar\pl_\infty(\ti\pi_\infty-\pi_j)||_{L^2(K)}\nonumber\\
&=&||\bar\pl_\infty\pi_j||_{L^2(K)}+||\ti\pi_\infty-\pi_j||_{L^2_1(K)}.\nonumber
\eea
The left hand side is independent of $j$, so we would like to send $j$ to infinity proving $||\bar\pl_\infty\ti\pi_\infty||_{L^2(K)}=0$. We have to be careful about the second term on the right since $\pi_j$ only converges to $\ti\pi_\infty$ weakly in $L^2_1$. However, we can achieve strong $L^2_1$ convergence along a subsequence, since given how curvature decomposes onto subbundles $\pi_j$ with quotient $Q_j$ (for instance see $\cite{GH}$), we have:
\be
\int_K|\nabla_j (\bar\pl_j\pi_j)|^2\o^n\leq\int_K|F_{A_j}|^2\o^n\leq C,\nonumber
\ee
where $\nabla_j$ is the induced connection on $Hom(Q_j,\pi_j)$. Because the Yang-Mills energy is decreasing, we know $\pi_j$ is bounded in $L^2_2$, and thus along a subsequence we have strong convergence in $L^2_1$. It follows that $||\bar\pl_\infty\ti\pi_\infty||_{L^2(K)}=0$. This holds independent of which compact set $K$ we choose, so 
\be
||\bar\pl_\infty\ti\pi_\infty||_{L^2(X\backslash Z_{an})}=||\bar\pl_\infty\ti\pi_\infty||_{L^2(X)}=0,\nonumber
\ee
since $Z_{an}$ has complex codimension two. Thus $\ti\pi_\infty$ is a weakly holomorphic $L^2_1$ 
subbundle of $(E_\infty,\bar\pl_\infty)$. Furthermore, because the eigenvalues of the projections $\pi_j$ are either zero or one, we know that rk$(\ti\pi_\infty)$=rk$(\pi_j)$. It also follows that $\mu(\pi_0)=\mu(\ti\pi_\infty)$. To see this note that $\pi_0$ was chosen so that $\mu(\pi_0)=\mu(E)=\mu(E_\infty)$, and because $||\bar\pl_\infty\ti\pi_\infty||_{L^2}=0$ we know by formula $\eqref{CW}$ and the fact that $E_\infty$ admits an admissible Hermitian-Einstein metric that $\mu(\ti\pi_\infty)=\mu(E_\infty)$. 

\end{proof}

Now that we have established convergence of the projections $\eqref{SF}$ along the action of $w_j$, the next step is to show convergence of the holomorphic inclusion maps into $E$. For notational simplicity we let $S$ be the sheaf of lowest rank from $\eqref{SF1}$, and let $f_0$ be the holomorphic inclusion of $S$ into $E$:
\be
\label{holomap}
0\longrightarrow S\xrightarrow{{\phantom {X}}{f_0}{\phantom {X}}}(E,\bar\pl_0)\longrightarrow Q\longrightarrow 0.
\ee
By holomorphic inclusion we mean $f_0$ is a holomorphic section of the sheaf $Hom(S,(E,\bar\pl_0))$, where $(E,\bar\pl_0)$ denotes the bundle $E$ with the holomorphic structure $\bar\pl_0$. For the rest of this section we work away from $Z_{alg}$, so $S$ is a vector bundle. Note the composition $f_j:= w_j\circ f_0$ defines an inclusion of $S$ into $(E,\bar\pl_j)$. 
\begin{lemma}
For all $j$, the map $f_j$ is a holomorphic section of the sheaf $Hom(S,(E,\bar\pl_j))$.
\end{lemma}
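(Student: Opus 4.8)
The plan is to verify that applying the complex gauge transformation $\ti w_j$ to the holomorphic inclusion $f_0$ produces a map which is again holomorphic with respect to the shifted holomorphic structure $\bar\pl_j$, and which remains injective. The key observation is that $f_0$ being holomorphic means $\bar\pl_0 f_0 = 0$ in the appropriate sense, i.e. $f_0$ intertwines the trivial $\bar\pl$ on $S$ (we may equip $S$ with whatever holomorphic structure it carries; since $S$ is a subsheaf of $(E,\bar\pl_0)$ the relevant statement is that the image sheaf is $\bar\pl_0$-invariant) with $\bar\pl_0$ on $E$. Since $\bar\pl_j = \bar\pl + A_j'' = \bar\pl + w_j A_0'' w_j^{-1} - \bar\pl w_j\, w_j^{-1}$, the conjugation structure is exactly what makes the composition work.

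First I would recall precisely what the action of $w_j = h_j^{1/2}$ does: the new holomorphic structure is $\bar\pl_j = w_j \circ \bar\pl_0 \circ w_j^{-1}$ as operators on sections of $E$ (this is the content of the formula $A'' = wA_0''w^{-1} - \bar\pl w\, w^{-1}$ from Section~\ref{bkg}, rewritten invariantly). Then for any section $s$ of $S$, holomorphy of $f_0$ gives $\bar\pl_0(f_0 s)$ lies in $f_0$ applied to a $\bar\pl$-derivative on $S$, so that $f_0$ maps the holomorphic sheaf $\cO_X(S)$ into $\cO_X(E,\bar\pl_0)$. Applying $w_j$: $\bar\pl_j(w_j f_0 s) = w_j \bar\pl_0 w_j^{-1} w_j f_0 s = w_j \bar\pl_0 (f_0 s)$, which lies in $w_j f_0$ applied to the holomorphic structure on $S$. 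Hence $w_j \circ f_0$, and therefore $f_j = \ti w_j \circ f_0$ (multiplying by a constant does not affect holomorphy), is holomorphic as a map $S \to (E,\bar\pl_j)$.

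Second I would check injectivity. Since $\ti w_j = c_j w_j$ with $c_j > 0$ a normalizing constant and $w_j = h_j^{1/2}$ is a positive-definite self-adjoint endomorphism of $E$, it is invertible on all of $X$; composing the injection $f_0$ (injective as a sheaf map, being the inclusion of the subsheaf $S$, hence injective on stalks away from $\Sigma$ and as a map of sheaves everywhere) with an invertible bundle endomorphism preserves injectivity. So $f_j$ realizes $S$ as a subsheaf of $(E,\bar\pl_j)$ with the same rank, and one can pass from the bundle-map statement to the subsheaf statement exactly as was done for $f_0$ via the Riemann extension theorem, since $\ti w_j$ is smooth and invertible on all of $X$ so introduces no new singularities.

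I do not expect a serious obstacle here; the statement is essentially bookkeeping about how complex gauge transformations act on holomorphic structures and holomorphic maps. The only point requiring a small amount of care is making sure the identity $\bar\pl_j = w_j \bar\pl_0 w_j^{-1}$ is stated and used correctly (as opposed to $h_j$ versus $w_j = h_j^{1/2}$, and the distinction between the action on connections versus on the $(0,1)$-part), and confirming that the normalization constant $\ti w_j / w_j$ is a genuine positive scalar so that $\|f_j\|_{L^2(X)} = 1$ is achievable — this uses that $\|w_j f_0\|_{L^2(X)}$ is finite and nonzero, which follows since $w_j f_0$ is a nonzero bounded section over the compact $X$ (the singularities of $f_0$ along $\Sigma$ being $L^2$, as noted after \eqref{holomap}).
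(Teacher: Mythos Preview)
Your argument is correct and is essentially the paper's proof phrased invariantly: the identity $\bar\pl_j = w_j\,\bar\pl_0\,w_j^{-1}$ you invoke is exactly the content of the formula $A'' = wA_0''w^{-1} - \bar\pl w\,w^{-1}$, and the paper simply unwinds this in connection-form coordinates to obtain $\bar\pl f_j + A''_j f_j - f_j A''_0 = 0$. You additionally make explicit the (easy) injectivity check, which the paper leaves implicit; one small correction is that $f_0$ is in fact bounded on all of $X$ (it extends holomorphically across $\Sigma$ by Riemann extension, as noted just after \eqref{holomap}), not merely $L^2$, but this only makes the normalization easier.
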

\begin{proof}
Using the defining equation for the action of $ w_j$, we know that $
A''_j= w_jA_0''w_j^{-1}-\bar\pl w_j w_j^{-1}$. 
Multiplying on the right by $ w_j\circ f_0$ and rearranging terms we get the equation:
\be
\label{eqq}
(\bar\pl  w_j)f_0+A''_j w_jf_0- w_j A''_0f_0=0.
\ee
Now $f_0$ is holomorphic with respect to $\bar \pl_0$, thus it satisfies $\bar\pl f_0+A''_0f_0-f_0A_0''=0$, which we can write as
$A''_0f_0=f_0A''_0-\bar\pl f_0$.
Plugging this into equation $\eqref{eqq}$ proves that $f_j$ solves
\be
\label{eqqq}
\bar\pl_jf_j=\bar\pl f_j+A''_jf_j-f_jA''_0=0,
\ee
and is indeed holomorphic.
\end{proof}

We are now ready to show convergence of the maps $f_j$ along a subsequence. The main difficulty is that we do not have control of the connections $A_j$ as we approach the analytic singular set $Z_{an}$, forcing us to work on compact subsets off $Z_{an}$. Also, to get the desired convergence we need to choose a suitable normalization of $f_j$. This does not affect our result since in the end we just need to construct an isomorphism. Here we note that in the argument to follow the constant $C(r)$ may change from line to line, yet it is always universal in $j$. 

\begin{proposition}
Let $f_j=w_j\circ f_0$ be the sequence of holomorphic inclusions of $S$ into $(E,\bar\pl_j)$ constructed above. After choosing a suitable normalization and passing to a subsequence, this sequence converges on compact subsets of $X\backslash (Z_{an}\cup Z_{alg})$ in $L^p_{2}$ to a holomorphic map $f_\infty$, which is non-trivial. $f_\infty$ extends to a map on all of $X$. 
\end{proposition}
\begin{proof}

Let $Z=Z_{alg}\cup Z_{an}$ and consider the family of compact subsets $K(r)\subset X\backslash Z$, defined to be the complement of a tube around $Z$:
\be
K(r):=X\backslash \bigcup_{x\in Z}B_r(x).\nonumber
\ee
Fix $r_0>0$. We normalize $f_j$ by its supremum on the set $K(r_0)$, and call this normalization $\ti f_j$. Now, because equation $\eqref{eqqq}$ is independent of normalization, it follows that $\ti f_j$ solves:
\be
\bar\pl \ti f_j+A''_j\ti f_j-\ti f_jA_0''=0.\nonumber
\ee
By Hong-Tian \cite{HT} we know $A_j$ is bounded in $C^\infty$ on compact sets away from $Z_{an}$, in particular we apply their result to the set $K(r)$, for $r<r_0$. Thus on $K(r)$ we have:
\be
\label{bound}
|\bar\pl\ti f_j|\leq C(r) |\ti f_j|,
\ee
and by Kato's inequality:
\be
\nonumber
\bar\pl |\ti f_j|\leq C(r) |\ti f_j|.
\ee
Since $|\ti f_j|$ is a real function on $X$ we have $d|\ti f_j|\leq C(r)|\ti f_j|$. It follows that in any small open set $|\ti f_j|$ grows at most exponentially in distance as we travel away from $K(r_0)$. Because the boundary of $K(r)$ is a finite distance from $K(r_0)$, the fact that $|\ti f_j|\leq 1$ on $K(r_0)$ proves that $|\ti f_j|\leq C(r)$ on $K(r)$. This holds for any $r>0$, although it may degenerate as $r$ approaches $0$. 

We now have a $C^0$ bound for $|\ti f_j|$, which along with \eqref{bound} allows us to conclude $|\bar\pl\ti f_j|\leq C(r)$. In a similar argument to before, we use how the $(1,0)$ part of the connection $A_j$ evolves under the Yang-Mills flow: $\pl w_j=A_0' w_j-w_j A_j'$, along with the bounds from \cite{HT} and the fact that the coordinate derivatives of $f_0$ are fixed and bounded to obtain $|\pl \ti f_j|\leq C(r)$. Thus the $C^1$ norm of $\ti f_j$ is bounded uniformly by $C(r)$ on $K(r)$.

Now $\ti f_j$ solves the equation:
\be
\label{holj}
\bar\pl^\dagger(\bar\pl \ti f_j+A''_j\ti f_j-\ti f_jA''_0)=0,\nonumber
\ee
which we rearrange to get the following equality:
\be
\bar\pl^\dagger\bar\pl \ti f_j=\bar\pl^\dagger(A''_j\ti f_j-\ti f_jA''_0).\nonumber
\ee
For a fixed $r$ we have uniform $C^1$ bounds for $A_j$ and $\ti f_j$ on $K(r)$ and thus the right hand side is bounded in $C^0$ independent of $j$. Ellipticity of $\bar\pl^\dagger\bar\pl$ gives that $\ti f_j$ are uniformly bounded in $L^p_2$ for any $p$, and thus, still working on $K(r)$, the $\ti f_j$ converge weakly in $L^p_2$ (and strongly in $L^p_1$) to a limiting map $f_\infty$. 
We see that:
 \be
 \bar\pl_\infty f_\infty=\bar\pl_j(f_\infty-\ti f_j)-(\bar\pl_j-\bar\pl_\infty)f_\infty,\nonumber
 \ee
so it follows:
\be
|| \bar\pl_\infty f_\infty||_{L^p(K(r))}\leq||f_\infty-\ti f_j||_{L^p_1(K(r))}+||A_j-A_\infty||_{L^q(K(r))}^\lambda||f_\infty||^{1-\lambda}_{L^s(K(r))},\nonumber
\ee
where $q$, $s$, and $\lambda$ are given by Holder's inequality.  The left hand side is independent of $j$, so sending $j$ to infinity we see
\be
|| \bar\pl_\infty f_\infty||_{L^p(K(r))}=0\nonumber
\ee
for any $p$. By elliptic regularity we have that $f_\infty$ is smooth, and thus holomorphic.

Next we note $f_\infty$ is not identically zero on $K(r)$. This follows because $K(r_0)\subset K(r)$ and by our normalization there exists a sequence of points $x_j$ in $K(r_0)$ where $|\ti f_j|=1$. Combining this with the uniform $C^1$ bounds in a neighborhood of $x_j$, implies that the $L^2$ norm of $\ti f_j$ can not degenerate to zero.  

In fact we can show $f_\infty$ is holomorphic section of $Hom(S,E_\infty)$ on all of $X$. Pick any point $x_0\in X\backslash Z$. Then there exists a $r'<r$ such that $K(r')$ contains $x_0$. By choosing the sequence $\ti f_j$ from above, and repeating the convergence argument for the compact set $K(r')$ as opposed to $K(r)$, we get convergence along a subsequence to a new holomorphic map $f'_\infty$ defined on all of $K(r')$. Since we choose a subsequence of our original sequence we know $f'_\infty=f_\infty$ on $K(r)$, thus $f_\infty$ extends to a holomorphic map on all of $K(r')$. We can do this for each point in in $X\backslash Z_{an}$, thus $f_\infty$ is holomorphic everywhere in $X\backslash Z$. Since $H^{2n-2}(Z)=0$, by Lemma 3 in $\cite{Shiff}$ and Proposition 5.21 in \cite{Kob} we know $f_\infty$ extends to a holomorphic section on all of $X$.

\end{proof}

\section{Construction of an isomorphism}
As stated in Section $\ref{bkg}$, we know the limiting Yang-Mills connection $A_\infty$ on $E_\infty$ over $X\backslash Z_{an}$ is Hermitian-Einstein, and thus will decompose $E_\infty$ into a direct sum of stable bundles:
\be
\label{SFinf1}
E_\infty=Q^1_\infty\oplus Q^2_\infty\oplus\cdots \oplus Q^p_\infty,
\ee
each admitting an induced smooth Hermitian-Einstein connection. 

\begin{proposition}
\label{iso}
Off $Z_{an}$ there exists an isomorphism between $E_\infty$ and $Gr^s(E)$.
\end{proposition}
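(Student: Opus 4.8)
The plan is to prove the isomorphism by induction on the length of the Seshadri filtration of $E$, peeling off one stable summand at each stage. The single algebraic mechanism used throughout is the following: if $A$ is stable, $B$ is semi-stable, $\mu(A)=\mu(B)$, and $\phi\colon A\to B$ is a nonzero sheaf map, then $\phi$ is injective — a nonzero kernel would exhibit a proper quotient of $A$, necessarily of slope $>\mu(A)$, as a subsheaf of $B$ — and its image then has the same rank and slope as $A$; if $B$ is moreover polystable, the saturation of the image is a direct summand of $B$ of the same rank as $A$. Since the Hong--Tian limit $E_\infty=Q^1_\infty\oplus\cdots\oplus Q^p_\infty$ is polystable of slope $\mu=\mu(E)$, this applies verbatim.

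For the base step, choose a destabilizing subsheaf $S\subset E$ of minimal rank; by semi-stability of $E$ it has no proper subsheaf of slope $\geq\mu$, so $S$ is stable, and we may take $S$ saturated. Proposition~\ref{convergence of pi} gives that the projections $\pi_j$ onto $w_j(S)$ converge in $L^2_1$ off $Z_{an}$ to a holomorphic sub-bundle $\tilde\pi_\infty$ of $E_\infty$ with the rank and slope of $S$, and the preceding Proposition gives the nonzero holomorphic limit $f_\infty\colon S\to E_\infty$ of the normalized inclusions $f_j$. Passing to the limit in $(I-\pi_j)f_j=0$ (which holds because $\pi_j$ is the orthogonal projection onto $\mathrm{Im}\,f_j$) puts $\mathrm{Im}\,f_\infty\subset\tilde\pi_\infty(E_\infty)=:S_\infty$, and the mechanism above makes $f_\infty\colon S\to S_\infty$ an isomorphism of sheaves on $X\backslash Z_{an}$ (injectivity from stability of $S$; surjectivity onto $S_\infty$ by comparison of rank and slope, using that $S_\infty$ is a sub-bundle there). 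Polystability of $E_\infty$ then splits off $S_\infty$ holomorphically, $E_\infty=S_\infty\oplus Q_\infty$, where $Q_\infty$ may be taken to be the orthogonal complement for the Hermitian--Einstein metric — a holomorphic sub-bundle because the second fundamental form of $S_\infty$ vanishes — and $Q_\infty$ is again polystable of slope $\mu$. In particular $Q^0:=S\cong S_\infty$ is the first summand.

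For the inductive step, pass to $E/S$: it is semi-stable of slope $\mu$, and the Seshadri filtration of $E$ descends to one of $E/S$ with stable quotients $Q^1,\dots,Q^q$, so $Gr^s(E)=Q^0\oplus Gr^s(E/S)$, a filtration of shorter length. The point is that $Q_\infty$ is exactly the limiting bundle produced by the Yang--Mills flow on $E/S$: since $\bar\pl_j\pi_j\to0$ in $L^2$ (established inside the proof of Proposition~\ref{convergence of pi}), the connections $A_j$ asymptotically respect the splitting $w_j(S)\oplus w_j(S)^\perp$, so the connections they induce on $E/w_j(S)\cong E/S$ differ from a genuine Yang--Mills flow on $E/S$ by terms controlled by the second fundamental form, and hence converge off $Z_{an}$, after a further subsequence, to $Q_\infty$. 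Applying the inductive hypothesis to $E/S$ yields $Q_\infty\cong Gr^s(E/S)$, whence $E_\infty=S_\infty\oplus Q_\infty\cong Q^0\oplus Gr^s(E/S)=Gr^s(E)$ off $Z_{an}$; the base case of the induction is $E$ stable, where the Donaldson heat flow converges and $E_\infty=E=Gr^s(E)$.

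I expect the main obstacle to be the decoupling step: quantifying how the off-diagonal blocks of $A_j$ decay relative to the curvature so that the induced flow on $E/S$ — together with the inherited normalization of its defining inclusion maps — genuinely falls under the Hong--Tian convergence theorem with limit $Q_\infty$. By contrast the truly hard analytic input, non-triviality of $f_\infty$, is already in hand from the preceding Proposition; the only remaining care needed is the routine rank-and-slope bookkeeping (the intermediate sheaves $S^i$ are only semi-stable and the Seshadri quotients may repeat, so one must check that the summands peeled off match the $Q^i$ with the correct multiplicities).
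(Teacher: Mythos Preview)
Your base step matches the paper's argument essentially verbatim: the minimal-rank destabilizer $S$ is stable, the limiting projection and inclusion give a nonzero holomorphic map $f_\infty\colon S\to\tilde S_\infty$ into a semi-stable sheaf of equal slope and rank, and the standard stable-to-semistable lemma (Kobayashi V.7.11--7.12) forces this to be an isomorphism.

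The inductive step, however, is framed in a way that manufactures the very obstacle you flag. You want to apply the inductive hypothesis to $E/S$ equipped with \emph{its own} Yang--Mills flow, and then argue that the resulting Hong--Tian limit agrees with $Q_\infty$. This ``decoupling'' is genuinely delicate---the induced connections on $E/S$ do not solve the Yang--Mills flow equation, so you would have to compare two a priori different limiting objects---and you have not carried it out.

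The paper sidesteps this entirely by never invoking Hong--Tian, or the inductive hypothesis in your packaged form, a second time. Look back at what the two preceding Propositions actually use about the sequence $A_j$: the convergence-of-projections Proposition needs only (i) convergence of the connections on compact subsets off $Z_{an}$ and (ii) $\|\Lambda F_{A_j}-\mu I\|_{L^1}\to 0$, which via the Chern--Weil identity forces the second fundamental forms to zero; the convergence-of-inclusions Proposition needs only (i) and holomorphicity. Neither needs $A_j$ to solve any flow. So for the inductive step one works directly with the \emph{induced} connections $p\circ(d+A_j)\circ p^\dagger$ on $Q=E/S$. Property (i) is immediate from the formula, and (ii) follows from the curvature decomposition onto the quotient,
\[
F^Q_j \;=\; F_{A_j}\big|_Q \;+\; (\bar\partial_j\pi_j)^\dagger\wedge(\bar\partial_j\pi_j),
\]
which gives $\|\Lambda F^Q_j-\mu I\|_{L^1}\le\|\Lambda F_{A_j}-\mu I\|_{L^1}+\|\bar\partial_j\pi_j\|_{L^2}^2\to 0$. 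With (i) and (ii) in hand the two Propositions run verbatim for the next subsheaf in the Seshadri filtration of $Q$, and one iterates. In short: unroll your induction and replace ``apply the Proposition to $E/S$'' with ``rerun the two lemmas for the induced connections on $E/S$''; the single missing ingredient is the displayed curvature identity, and your decoupling worry evaporates.
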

\begin{proof}
Recall that $Gr^s(E)$ is the direct sum of the stable quotients from the filtration $\eqref{SF}$. We will show each quotient $Q^{(i)}_0$ is isomorphic to its corresponding limit $\ti Q^{(i)}_\infty$ from Proposition $\ref {convergence of pi}$, working by induction on the rank of $\pi^{(i)}_0$. This will prove $\{\ti\pi^{(i)}_\infty\}$ forms a Seshadri filtration of $E_\infty$, thus $\oplus_i\ti Q^i_\infty$ must be isomorphic to $\oplus_iQ^i_\infty$.

Choose the destabilizing subsheaf of $(E,\pl_0)$ with the lowest rank, which we call $\pi_0$ for notational simplicity. Let $S$ be the holomorphic subbundle of $E$ defined by the projection $\pi_0$, and let $f_0$ denote the holomorphic inclusion of $S$ into $E$:
\be
0\longrightarrow S\xrightarrow{{\phantom {X}}{f_0}{\phantom {X}}}(E,\bar\pl_0)\longrightarrow Q\longrightarrow 0. \nonumber
\ee
Now, choose a sequence of holomorphic structures $\{\bar\pl_j\}$ evolving along by the Yang-Mills flow, and by the previous two propositions we can construct a limiting projection $\ti \pi_\infty$ and a non-zero limiting holomorphic map $f_\infty$. As before we know rk$(\pi_0)=$ rk$(\ti\pi_\infty)$, and $\ti\pi_\infty$ defines a coherent sub-sheaf $\ti S_\infty$ of $E_\infty$. Now, because along the flow $\pi_j\circ\ti f_j=\ti f_j$, we see that in the limit $\ti\pi_\infty\circ f_\infty=f_\infty.$ Thus we know $f_\infty$ is a holomorphic map not just into $E_\infty$, but one which includes into $\ti S_\infty$:
\be
f_\infty:S\longrightarrow \ti S_\infty.\nonumber
\ee
We also know that $\mu(S)=\mu(\ti S_\infty)$. Now, because we choose $S$ to have minimal rank among all subsheaves of $E$ with that property $\mu(S)=\mu(E)$, we know $S$ is stable. Thus by $\cite{Kob}$ Chapter V (7.11), since $S$ is stable and $\ti S_\infty$ semi-stable, the holomorphic map $f_\infty$ is injective and rk$(S)=$rk$(f_\infty(S))$. Thus by Corollary 7.12 from $\cite{Kob}$ the map $f_\infty$ is an isomorphism between $S$ and $\ti S_\infty$. We note that although the quoted results from $\cite{Kob}$ are only stated for compact K\"ahler manifolds, the arguments needed trivially carry over for the manifold $X\backslash Z_{an}$.  It now follows that $\ti S_\infty$ is stable and indecomposable. It is indecomposable because $S$ is indecomposable. To show $\ti S_\infty$ is stable assume there exists a subsheaf $\cal F$ such that $\mu({\cal F})=\mu(\ti S_\infty)$. Since $\ti S_\infty$ admits an admissible Hermitian-Einstein metric, by formula $\eqref {CW}$ the second fundamental form associated to ${\cal F}$ is zero, creating a holomorphic splitting of $\ti S_\infty$, which contradicts the fact that it is indecomposable.

We now continue this process. Since $S$ is stable we know its quotient $Q$ is semi-stable, and that along the Yang Mills flow the holomorphic structure $(Q,\bar\pl_j)$ converges to a limiting holomorphic structure on $(\ti Q_\infty,\bar\pl_\infty)$. If $p$ denotes the holomorphic projection from $E$ onto $Q$, and $p^\dagger$ is the adjoint of $p$ in the fixed metric $H_0$, then the sequence of induced connections on $Q$ is given by:
\be
p\circ(d+A_j)\circ p^\dagger\nonumber.
\ee
From this formula it is clear that these induced connections satisfy the same bounds as $A_j$ and converge on compact subsets away from $Z_{an}$ along a subsequence. The final thing we need to check in order to repeat the argument is that the second fundamental form associated to any destabilizing subsheaf of $Q$ goes to zero in $L^2$, and by estimate $\eqref{L1est}$ we see this follows if $||\Lambda F^Q_j-\mu I||_{L^1}$ goes to zero, where $F^Q_j$ is the curvature of the induced connection on $Q$. Using the well know decomposition formula of curvature onto quotient bundles (for instance see $\cite{GH}$) we have:
\be
F^Q_j=F|_Q+(\bar\pl_j\pi_j)^\dagger\wedge(\bar\pl_j\pi_j).\nonumber
\ee
Thus:
\be
||\Lambda F^Q_j-\mu I||_{L^1}\leq||\Lambda F_j-\mu I||_{L^1}+||\bar\pl_j\pi_j||^2_{L^2}.\nonumber
\ee
Now by estimate $\eqref{L1est}$ and the existence of an approximate Hermitian-Einstein structure it follows that $||\Lambda F^Q_j-\mu I||_{L^1}$ goes to zero. Thus we can pick a destabilizing subsheaf of $Q$ and the argument can be repeated inductively, allowing us to construct an isomorphism from each quotient $Q^i$ in $Gr^s(E)$ into one of the indecomposable stable bundles from
\be
\label{SFinf}
E_\infty=\ti Q^1_\infty\oplus\cdots\oplus\ti Q^p_\infty.
\ee
Since each $\ti Q^i_\infty$ is stable, this proves the $\ti S^i_\infty$ form a Seshadri filtraton of $E_\infty$, and thus $\eqref{SFinf}$ must be isomorphic to $\eqref{SFinf1}$.

\end{proof}
We have the following corollary, which we will need in the proof of Theorem $\ref{main theorem}$.
\begin{corollary}
\label{cor}
The algebraic singular set $Z_{alg}$ is contained in the analytic singular set $Z_{an}$.
\end{corollary}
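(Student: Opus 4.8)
The plan is to read the corollary off directly from Proposition \ref{iso}, with essentially no extra work. First I would recall the Hong--Tian convergence theorem quoted at the end of Section \ref{bkg}: over $X\backslash Z_{an}$ the limiting object $E_\infty$ is a genuine holomorphic vector bundle, carrying the smooth Hermitian--Einstein connection $A_\infty$ and the induced splitting into stable summands. In particular the sheaf of holomorphic sections of $E_\infty$ is locally free at every point of $X\backslash Z_{an}$.

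Next I would invoke Proposition \ref{iso}: over $X\backslash Z_{an}$ there is an isomorphism of coherent analytic sheaves between $Gr^s(E)$ and $E_\infty$. Restricting the torsion-free sheaf $Gr^s(E)$ to the open set $X\backslash Z_{an}$ and transporting structure through this isomorphism, one concludes that $Gr^s(E)$ is locally free on $X\backslash Z_{an}$. Hence for every $x\in X\backslash Z_{an}$ the stalk $Gr^s(E)_x$ is free, so $x\notin Z_{alg}$. Contrapositively, $Z_{alg}\subseteq Z_{an}$, which is the assertion of the corollary.

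Since this is a one-step deduction there is no genuine obstacle; the only point that merits care is that the isomorphism supplied by Proposition \ref{iso} is an honest sheaf isomorphism over all of $X\backslash Z_{an}$ — including over the analytic locus on which $Gr^s(E)$ might a priori fail to be locally free — and not merely a generic identification. This is exactly what the construction in the proof of Proposition \ref{iso} delivers, since the maps $f_\infty$ there are built as genuine holomorphic maps of sheaves without any prior assumption on $Z_{alg}$, so no circularity is introduced.
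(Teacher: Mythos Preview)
Your argument is correct and is essentially the same as the paper's: both deduce from Proposition \ref{iso} that $Gr^s(E)$ is locally free on $X\backslash Z_{an}$ (the paper phrases this by contradiction via a single quotient $Q^i$, you phrase it as a direct contrapositive for the whole graded sheaf), yielding $Z_{alg}\subseteq Z_{an}$. Your remark that no circularity is introduced is apt, since the paper invokes this corollary in the subsequent proof of Theorem \ref{main theorem}.
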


\begin{proof}

We prove $Z_{alg}\subseteq Z_{an}$. Suppose there exists a point $x_0\in Z_{alg}$ which is not in $Z_{an}$. We know there exists a quotient $Q^i$ from $Gr^s(E)$ such that $Q^i$ is not locally free at $x_0$. Yet by Theorem $\ref{main theorem}$ we know $Q^i$ is isomorphic to some $Q^i_\infty$ from the direct sum $E_\infty=\oplus_i Q^i_\infty$, and since $E_\infty$ is a vector bundle off $Z_{an}$ we know $Q^i_\infty$ is locally free there. 

\end{proof}

It would be quite valuable to know the other set inclusion, proving that in fact $Z_{alg}= Z_{an}$. This would show that the bubbling set $Z_{an}$ is unique and canonical, and does not depend on the subsequence we choose along the Yang-Mills flow.  However, to do so a much more detailed analysis of the singular set is needed. 

We now extend the isomorphism over $Z_{an}$, finishing the proof of Theorem $\ref{main theorem}$:

\begin{proof}
First we note that $E_\infty$ can be extended over $Z_{an}$ as the reflexive sheaf $Gr^s(E)^{**}$. Colollary $\ref{cor}$ implies\be
\label{finalcong}
\Gamma(X\backslash Z_{an},Gr^s(E))\cong\Gamma(X\backslash Z_{an},Gr^s(E)^{**}),
\ee
since $Gr^s(E)$ is locally free on $X\backslash Z_{an}$. Also, because $Gr^s(E)^{**}$ is reflexive it is defined by $Hom(Gr^s(E)^{*},\cO)$. Since all holomorphic functions can be extended over $Z_{an}$ $\cite{Shiff}$, it follows that 
\be
\Gamma(X\backslash Z_{an},Gr^s(E)^{**})\cong\Gamma(X,Gr^s(E)^{**}).\nonumber
\ee
Combining this isomorphism with $\eqref{finalcong}$ and Proposition $\ref{iso}$ we see that $E_\infty$ extends over the singular set $Z_{an}$ as the reflexive sheaf $Gr^s(E)^{**}$. We now quickly go over the sheaf extension of Bando and Siu. 

As stated in $\cite{BaS}$, this extension is a consequence of Bando's removable singularity theorem $\cite{Ba}$ and Siu's slicing theorem $\cite{Siu1}$. We work on a coordinate chart $U\subset \C^n$. It follows from the final corollary in $\cite {Shiff}$ that given a point $x\in U$, since $H^{2n-4}(Z_{an})\leq C$ almost all complex two planes through $x$ intersect $Z_{an}$ at a finite number of points. Thus if we choose a countable dense set $A\subset U$, for each point in $A$ we can choose a complex plane with a fixed normal vector that intersects $Z_{an}$ at a finite number of points. Change coordinates so that this plane $P$ is given by $z_3=z_4=\cdots=z_n=c$. Let $\Delta^2$ be a polydisk in $P$. Given a domain $D\subset U$ which lies in $U\cap\C^{n-2}$, let $A'$ be the projection of $A$ onto $D$. We are now ready to apply the slicing theorem. $E_\infty$ is a locally free sheaf defined on $(D\times \Delta^2)\cap Z_{an}$. For each $t\in A'$, we have that $(\{t\}\times \Delta^2)\cap Z_{an}$ is a finite number of points, thus by Theorem 10 from $\cite{Ba}$ $E_\infty$ can be extended to a locally free sheaf $E_\infty(t)$ on $\{t\}\times\Delta^2$. Thus $E_\infty$ can be uniquely extended to a reflexive sheaf $\hat E_\infty$ on $D\times \Delta^2$. 

The uniqueness condition stated in $\cite{Siu1}$ is characterized by the fact that given any other reflexive extension (in our case $Gr^s(E)^{**}$), there exists a sheaf isomorphsim $\phi:\hat E_\infty\longrightarrow Gr^s(E)^{**}$ on $X$, which restricts to the isomorphism constructed in Proposition $\ref{iso}$ on $X\backslash Z_{an}$. This completes the proof of Theorem $\ref{main theorem}$.

\end{proof}

Thus even through we do not know whether $Z_{an}$ depends on the subsequence $A_j$, the limiting reflexive sheaf $\hat E_\infty$ (defined on all of $X$) is canonical and does not depend on the choice of subsequence. 

\end{normalsize}

\newpage

\vspace{10mm}

\begin{centering}

\textnormal{ Department of Mathematics,
Harvard University,
Cambridge, MA 02138\\
e-mail: ajacob@math.harvard.edu}

\end{centering}

\end{document}